\newtheorem{theorem}{Theorem}
\newtheorem{corollary}{Corollary}[theorem]
\begin{document}


\title{On a problem of the theory of elasticity}


\author{Maksut M. Abenov}
\affiliation{Institute of Mechanics and Mechanical Engineering, National  Academy of Sciences, Republic of Kazakhstan, Almaty}
\email{abenov.m.m@gmail.com}
\author{Nourlan B. Shaltykov}%
\affiliation{Institute of Fundamental Researches, Republic of Kazakhstan, Almaty}
\email{nshaltykov@gmail.com}


\date{\today}

\begin{abstract}
The exact solution of the Cauchy problem of the linear theory of elasticity is given in the paper, when the initial data belong to a specific class of functions.
\end{abstract}

\pacs{}

\maketitle

\section{Statement of the problem}
Let $ \vec {U} = (u_1 (t, x_1, x_2, x_3), u_2, u_3) $ be the unknown displacement vector, $ \rho, \lambda, \mu $ be the known continuous Lamé constants. Investigated continuous medium is in an elastically deformed state under the action of a known mass force $\vec{F}=(F_1(t,x_1,x_2,x_3),F_2,F_3)$.

We consider the system of equations of the theory of elasticity \cite{Gr-Ze}:
\begin{equation}\label{NS}
\mu\Delta\vec{U}+ (\lambda+\mu) \nabla (div\vec{U}) + \rho\vec{F} =\rho \frac{\partial^2\vec{U}}{\partial t^2} \\
\end{equation}
in the $ G =  R^3\times [0, \infty) $ region with the following initial conditions:
\begin{equation}\label{init1}
\vec{U}(0,x_1,x_2,x_3)=\vec{\varphi}=(\varphi_1(x_1,x_2,x_3),\varphi_2,\varphi_3)
\end{equation}
\begin{equation}\label{init2}
\frac{\partial\vec{U}}{\partial t}(0,x_1,x_2,x_3)=\vec{\psi}=(\psi_1(x_1,x_2,x_3),\psi_2,\psi_3)
\end{equation}
It is necessary to determine sufficient conditions for the initial data of problem (1) - (3) for which there exist bounded smooth solutions $\vec{U}\in C^\infty (G)$ of the Cauchy problem with bounded smooth components of the corresponding stress tensor $\tau_{ij}\in C^\infty (G), i,j=\overline{1,3}$.
\section{Basic theorem}
\begin{theorem}[Basic theorem]
	Let the initial data of the problem satisfy the following conditions:
	\begin{enumerate}[label=\Alph*.]
		\item $\vec{F}\in C^\infty (G),\vec{\varphi},\vec{\psi}\in C^\infty (R^3)$ and the components of these vectors vanish at infinity, along with its derivatives of any order.
		\item The following relations hold:
		\begin{equation}\label{b1}
		rot\vec{F}=0;\quad  rot\vec{\varphi}=0;\quad  rot\vec{\psi}=0;
		\end{equation}
		\begin{equation}\label{b2}
		\frac{\partial\varphi_1}{\partial x_1}=\frac{\partial\varphi_2}{\partial x_2}=\frac{\partial\varphi_3}{\partial x_3}
		\end{equation}
		\begin{equation}\label{b3}
		\frac{\partial\psi_1}{\partial x_1}=\frac{\partial\psi_2}{\partial x_2}=\frac{\partial\psi_3}{\partial x_3}
		\end{equation}	
	\end{enumerate}
\end{theorem}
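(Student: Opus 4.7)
The idea is to reduce the vector system~(\ref{NS}) to a scalar wave equation by introducing a scalar potential, exploiting that the Cauchy problem preserves the curl-free structure guaranteed by condition~B. The first step is to apply $\mathrm{rot}$ to~(\ref{NS}). Since $\mathrm{rot}\,\nabla = 0$ kills the $(\lambda+\mu)\nabla(\mathrm{div}\,\vec{U})$ term and $\mathrm{rot}$ commutes with $\Delta$, the vector $\vec{W} = \mathrm{rot}\,\vec{U}$ satisfies the homogeneous vector wave equation $\rho\,\partial_t^2\vec{W} = \mu\Delta\vec{W}$. By A and~(\ref{b1}) its Cauchy data and source all vanish, so uniqueness for the wave equation on $\mathbb{R}^3$ with decaying data forces $\vec{W}\equiv 0$.

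With $\vec{U}$ curl-free on simply-connected $\mathbb{R}^3$, set $\vec{U}=\nabla\Phi$, and similarly $\vec{\varphi}=\nabla\varphi_{0}$, $\vec{\psi}=\nabla\psi_{0}$, $\vec{F}=\nabla f$, each potential normalized to vanish at infinity (the normalization is well posed because decay in A and the divergence theorem force the total mass of $\mathrm{div}\,\vec{\varphi}$, etc., to vanish). Substituting into~(\ref{NS}) and using $\Delta\nabla = \nabla\Delta$ converts the system into $\nabla\bigl[(\lambda+2\mu)\Delta\Phi + \rho f - \rho\,\partial_t^2\Phi\bigr] = 0$; decay at infinity eliminates the function-of-$t$ integration constant, leaving the scalar wave equation
\[
\partial_t^2\Phi \;=\; a^2\Delta\Phi + f,\qquad a^2 \;=\; \frac{\lambda+2\mu}{\rho},
\]
with Cauchy data $\Phi|_{t=0}=\varphi_{0}$, $\partial_t\Phi|_{t=0}=\psi_{0}$, solvable explicitly by Kirchhoff's formula. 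Smoothness and decay from~A pass through to $\Phi\in C^\infty(G)$, bounded together with derivatives of every order, whence $\vec{U}=\nabla\Phi$ is the desired solution of~(\ref{NS})--(\ref{init2}) and the stress $\tau_{ij}=\lambda\delta_{ij}\Delta\Phi+2\mu\,\partial_i\partial_j\Phi$ is automatically smooth and bounded.

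Conditions~(\ref{b2}) and~(\ref{b3}) should then appear as the additional structural identity $\partial_1^2\Phi = \partial_2^2\Phi = \partial_3^2\Phi$ carried by the potential, propagated in $t$ by applying the operator $\partial_i^2-\partial_j^2$ to the scalar wave equation and reinvoking uniqueness; this is the step I expect to be the main obstacle, because the same symmetry must also be present in the source $f$ for propagation to work, which requires reading an analogous condition on $\vec{F}$ out of the hypotheses (or else first treating the purely homogeneous case $\vec{F}\equiv 0$ and appealing to Duhamel). A secondary technical point is to justify that $\vec{W}(t,\cdot)$ remains uniformly decaying at spatial infinity so that scalar uniqueness for the wave equation applies in Step~1; this follows from the finite propagation speed for the $\mu\Delta$ operator together with the strong vanishing at infinity in condition~A.
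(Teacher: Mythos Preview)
Your approach is essentially correct but takes a genuinely different route from the paper, and in doing so you bypass the structural simplification that is the paper's main point. The paper never introduces a scalar potential or the three-dimensional Kirchhoff formula. Instead it uses conditions (\ref{b2})--(\ref{b3}), together with $\mathrm{rot}\,\vec U=0$, as the \emph{primary} reduction: from $\partial_{1}u_{1}=\partial_{2}u_{2}=\partial_{3}u_{3}$ and the curl-free relations one obtains $\mathrm{div}\,\vec U=3\,\partial_{k}u_{k}$ and $\Delta u_{k}=3\,\partial_{k}^{2}u_{k}$, so each component of (\ref{NS}) collapses to a \emph{one-dimensional} wave equation $\partial_{t}^{2}u_{k}=a^{2}\partial_{k}^{2}u_{k}+F_{k}$ with the larger speed $a^{2}=3(\lambda+2\mu)/\rho$. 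The explicit formulas (\ref{da1})--(\ref{da3}) are then plain d'Alembert, the other two spatial variables riding along as parameters. Your Kirchhoff route, with speed $(\lambda+2\mu)/\rho$, yields an existence and regularity argument that in fact uses only (\ref{b1}) and would work for any irrotational data; but it does not by itself produce the one-dimensional closed-form solution that is the stated conclusion, and to recover it you would have to carry out precisely the propagation-of-symmetry step you flag as the main obstacle. In short, what you treat as a secondary structural identity is, in the paper's argument, the engine of the whole proof.

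Your worry about the force term is justified: propagating $\partial_{1}u_{1}=\partial_{2}u_{2}=\partial_{3}u_{3}$ in time (equivalently, verifying that the d'Alembert expressions (\ref{da1})--(\ref{da3}) really satisfy (\ref{NS})) needs an analogue of (\ref{b2}) for $\vec F$, not merely $\mathrm{rot}\,\vec F=0$. The paper simply asserts that (\ref{11})--(\ref{12}) follow ``from (\ref{b1})--(\ref{b3})'' and does not confront this; so your concern is real, but it is a gap shared with the paper rather than one introduced by your alternative approach.
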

Then:
\begin{corollary}
	The only bounded solution of the problem (1) - (3) $ \vec {U} \in C^\infty (G) $ is given by the following formulas:
	\begin{multline}\label{da1}
	u_1=\frac{\varphi_1(x_1+at,x_2,x_3)+\varphi_1(x_1-at,x_2,x_3)}{2}+\\
	\frac{1}{2a}\int\limits_{x_1-at}^{x_1+at}\psi_1(\alpha,x_2,x_3)d\alpha+\frac{1}{2a}\int\limits_{0}^{t}d\tau\int\limits_{x_1-a(t-\tau)}^{x_1+a(t-\tau)}F_1(\tau,\alpha,x_2,x_3)d\alpha
	\end{multline}
	\begin{multline}\label{da2}
	u_2=\frac{\varphi_2(x_1,x_2+at,x_3)+\varphi_1(x_1,x_2-at,x_3)}{2}+\\
	\frac{1}{2a}\int\limits_{x_2-at}^{x_2+at}\psi_2(x_1,\beta,x_3)d\beta+\frac{1}{2a}\int\limits_{0}^{t}d\tau\int\limits_{x_2-a(t-\tau)}^{x_2+a(t-\tau)}F_2(\tau,x_1,\beta,x_3)d\beta
	\end{multline}
	\begin{multline}\label{da3}
	u_3=\frac{\varphi_3(x_1,x_2,x_3+at)+\varphi_3(x_1,x_2,x_3-at)}{2}+\\
	\frac{1}{2a}\int\limits_{x_3-at}^{x_3+at}\psi_3(x_1,x_2,\gamma)d\gamma+\frac{1}{2a}\int\limits_{0}^{t}d\tau\int\limits_{x_3-a(t-\tau)}^{x_3+a(t-\tau)}F_3(\tau,x_1,x_2,\gamma)d\gamma
	\end{multline}
\end{corollary}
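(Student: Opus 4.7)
My strategy is to reduce the vector system~(\ref{NS}) to three decoupled one‑dimensional wave equations along the coordinate axes and then apply the classical d'Alembert formula. Specifically I aim to show that any smooth bounded solution $\vec U$ satisfies (i)~$\mathrm{rot}\,\vec U \equiv 0$ and (ii)~$\partial u_1/\partial x_1 = \partial u_2/\partial x_2 = \partial u_3/\partial x_3$ on $G$. Once (i)--(ii) hold, the curl‑free identity $\Delta u_i = \partial(\mathrm{div}\,\vec U)/\partial x_i$ combined with $\mathrm{div}\,\vec U = 3\,\partial u_i/\partial x_i$ collapses the $i$-th scalar component of~(\ref{NS}) to the 1D wave equation $\rho\,\partial_t^2 u_i = 3(\lambda+2\mu)\,\partial^2 u_i/\partial x_i^2 + \rho F_i$ with speed $a^{2} = 3(\lambda+2\mu)/\rho$, whose d'Alembert solution with data $\varphi_i,\psi_i,F_i$ is precisely~(\ref{da1})--(\ref{da3}).

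\textbf{Step 1: Establishing (i).} Taking the curl of~(\ref{NS}) annihilates the $\nabla(\mathrm{div}\cdot)$ term, so $\vec W := \mathrm{rot}\,\vec U$ satisfies the homogeneous vector wave equation $\rho\,\partial_t^2 \vec W = \mu\,\Delta \vec W$ (using $\mathrm{rot}\,\vec F = 0$ from~(\ref{b1})), with vanishing initial data supplied by $\mathrm{rot}\,\vec\varphi = \mathrm{rot}\,\vec\psi = 0$. Uniqueness of bounded smooth solutions to the vector wave Cauchy problem yields $\mathrm{rot}\,\vec U \equiv 0$.

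\textbf{Step 2: Establishing (ii).} The algebraic heart of the argument is the observation that hypotheses~(\ref{b1})--(\ref{b2}) force each scalar $\varphi_j$ to obey $\partial^2 \varphi_j/\partial x_i^2 = \partial^2 \varphi_j/\partial x_k^2$ for all $i,k$: indeed,
\[
\frac{\partial^2 \varphi_j}{\partial x_i^2} = \frac{\partial}{\partial x_i}\!\left(\frac{\partial \varphi_i}{\partial x_j}\right) = \frac{\partial}{\partial x_j}\!\left(\frac{\partial \varphi_i}{\partial x_i}\right) = \frac{\partial}{\partial x_j}\!\left(\frac{\partial \varphi_k}{\partial x_k}\right) = \frac{\partial}{\partial x_k}\!\left(\frac{\partial \varphi_j}{\partial x_k}\right) = \frac{\partial^2 \varphi_j}{\partial x_k^2},
\]
using $\mathrm{rot}\,\vec\varphi = 0$ (first and fourth equalities) and~(\ref{b2}) (middle equality); the same identity holds for $\psi_j$ and $F_j(t,\cdot)$ under the analogous (tacit) diagonal condition on $\vec F$. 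Given any scalar $f$ with $\partial_i^2 f = \partial_j^2 f$, the auxiliary function $w(s,\sigma) := f(x + s\,e_i + \sigma\,e_j)$ satisfies the 1+1 wave equation $\partial_s^2 w = \partial_\sigma^2 w$, hence decomposes as $w(s,\sigma) = A(s+\sigma) + B(s-\sigma)$; evaluating at $(\pm at,0)$ versus $(0,\pm at)$ yields the direction-invariance $f(x + at\,e_i) + f(x - at\,e_i) = f(x + at\,e_j) + f(x - at\,e_j)$, and by uniqueness for the scalar 1D wave Cauchy problem the single and double integral terms in~(\ref{da1})--(\ref{da3}) satisfy the same invariance. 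Combined with the curl-free and diagonal identities on the data, this gives $\partial u_i/\partial x_j = \partial u_j/\partial x_i$ and $\partial u_i/\partial x_i = \partial u_j/\partial x_j$ for the candidate $\vec U$.

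\textbf{Step 3: Verification, uniqueness, and main obstacle.} With (i)--(ii) secured for the candidate, direct substitution into~(\ref{NS}) reduces both sides to the 1D wave equation, which the $u_i$ satisfy by construction, while~(\ref{init1})--(\ref{init2}) follow immediately from the d'Alembert formulas. Uniqueness is obtained by running the entire reduction on the difference $\vec U - \vec U^{\star}$ of any two bounded smooth solutions: this difference inherits (i)--(ii) with zero data and hence vanishes componentwise by uniqueness for the scalar 1D wave Cauchy problem. The serious step is the direction-invariance lemma of~(ii); once the symmetry $\partial_i^2 f = \partial_j^2 f$ of the data has been extracted from~(\ref{b1})--(\ref{b2}) via the algebraic chain above, everything else is routine bookkeeping built on the identity $\Delta u_i = \partial(\mathrm{div}\,\vec U)/\partial x_i = 3\,\partial^2 u_i/\partial x_i^2$.
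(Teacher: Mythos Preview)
Your strategy coincides with the paper's: both reduce~(\ref{NS}) to the three decoupled one-dimensional wave equations (\ref{w1})--(\ref{w3}) by imposing $\mathrm{rot}\,\vec U=0$ and $\partial_{x_i} u_i=\partial_{x_j} u_j$ (the paper's relations (\ref{11})--(\ref{12})), and then invoke d'Alembert. The paper, however, simply \emph{asserts} (\ref{11})--(\ref{12}) as ``easy to understand from (\ref{b1})--(\ref{b3})'' and moves on, whereas you supply genuine arguments: Step~1 derives $\mathrm{rot}\,\vec U=0$ by taking the curl of~(\ref{NS}) and using uniqueness for the homogeneous vector wave equation, and Step~2 introduces a direction-invariance lemma (any $f$ with $\partial_i^2 f=\partial_j^2 f$ satisfies $f(x+at\,e_i)+f(x-at\,e_i)=f(x+at\,e_j)+f(x-at\,e_j)$) to verify (\ref{11})--(\ref{12}) for the explicit candidate (\ref{da1})--(\ref{da3}). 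Your observation that the diagonal condition $\partial_{x_i}F_i=\partial_{x_j}F_j$ is a tacit hypothesis is correct and is indeed missing from the theorem statement. One small wrinkle in Step~3: ``running the entire reduction'' on the difference $\vec V=\vec U-\vec U^\star$ presupposes~(ii) for $\vec V$, which is precisely what has not yet been shown for an arbitrary solution; a cleaner closure is to note that once $\mathrm{rot}\,\vec V=0$ (your Step~1 applied to the homogeneous problem), the identity $\Delta v_i=\partial_{x_i}(\mathrm{div}\,\vec V)$ already turns each component into a solution of the three-dimensional scalar wave equation $\rho\,\partial_t^2 v_i=(\lambda+2\mu)\Delta v_i$ with zero Cauchy data, whence $\vec V\equiv 0$ without ever invoking~(ii).
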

\begin{corollary}
	The components of the stress tensor $ \tau_{ij} \in C^\infty (G), i, j = \overline {1,3} $ corresponding to this solution, will also be bounded functions, which follows from the formula:
	\begin{equation}\label{tau}
	\tau_{ij}=\lambda \delta_{ij}div\vec{U}+\mu (\frac{\partial u_i}{\partial x_j}+\frac{\partial u_j}{\partial x_i});\quad i,j=\overline{1,3} 
	\end{equation}
	where $\delta_{ij}$ is Kronecker symbol, $a=\sqrt{\frac{3(\lambda+2\mu)}{\rho}}$.
\end{corollary}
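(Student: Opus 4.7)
The plan is to leverage Corollary 1, which already provides the explicit formulas \eqref{da1}--\eqref{da3} for the displacement components $u_i$. Since \eqref{tau} expresses each stress component $\tau_{ij}$ as a linear combination of first-order partial derivatives of $\vec{U}$, the task reduces to (a) verifying that every $\partial u_i/\partial x_k$ lies in $C^\infty(G)$ and (b) showing that these derivatives are uniformly bounded on $G$. Once both are established, substitution into \eqref{tau} immediately yields $\tau_{ij}\in C^\infty(G)$ with the required boundedness.

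For the smoothness part I would observe that each $u_i$ in \eqref{da1}--\eqref{da3} is a sum of smooth evaluations of $\varphi_i$ at $x_i\pm at$, a single integral of $\psi_i$ over an interval with smoothly varying endpoints, and a double integral of $F_i\in C^\infty(G)$ over a characteristic triangle. Standard Leibniz differentiation under the integral sign is legitimate in every variable and at every order, so $u_i\in C^\infty(G)$ and hence $\partial u_i/\partial x_k\in C^\infty(G)$. Formula \eqref{tau} then places $\tau_{ij}$ in $C^\infty(G)$.

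For boundedness I would differentiate explicitly. In the diagonal case $k=i$ the chain rule produces $\partial u_i/\partial x_i$ as a sum of half-values of $\partial\varphi_i/\partial x_i$ at $x_i\pm at$, the finite difference $[\psi_i(x_i+at)-\psi_i(x_i-at)]/(2a)$, and the time integral $(2a)^{-1}\int_0^t\bigl[F_i(\tau,x_i+a(t-\tau))-F_i(\tau,x_i-a(t-\tau))\bigr]d\tau$, each of which is bounded because $\partial\varphi_i/\partial x_i$, $\psi_i$, and $F_i$ vanish at infinity by hypothesis A. In the off-diagonal case $k\neq i$ the variable $x_k$ enters \eqref{da1}--\eqref{da3} only as a parameter, so differentiation under the integral simply replaces $\varphi_i,\psi_i,F_i$ by their $\partial/\partial x_k$-derivatives, which by hypothesis A also vanish at infinity; the same estimate applies. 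Summing the three diagonal contributions gives a bounded $\mathrm{div}\,\vec{U}$, and every off-diagonal piece feeds a bounded term into \eqref{tau}.

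The main obstacle will be making the force contribution to $\partial u_i/\partial x_k$ uniformly bounded in $t$: pointwise decay of $F_i$ at spatial infinity alone does not suppress the $\int_0^t d\tau$ growth, so hypothesis A must be read in a stronger (Schwartz-type or integrability) sense, as is already implicitly needed in Corollary 1 for boundedness of $u_i$ itself. Once this interpretation is adopted, each integrand is dominated by an integrable envelope uniformly in $(x_1,x_2,x_3)$, the estimates close, and \eqref{tau} delivers $\tau_{ij}\in C^\infty(G)$ as a bounded function.
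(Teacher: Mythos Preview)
Your approach is essentially the same as the paper's: once the d'Alembert formulas \eqref{da1}--\eqref{da3} give $u_k\in C^\infty(G)$ with bounded derivatives, formula \eqref{tau} immediately yields the smoothness and boundedness of $\tau_{ij}$. The paper dispatches this in a single sentence (``The boundedness of the components of the stress tensor defined by \eqref{tau} is trivial''), so your explicit diagonal/off-diagonal differentiation and your honest remark about the $t$-growth of the force term actually go further than the authors do.
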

\begin{proof}
	It is easy to understand from (\ref{b1})-(\ref{b3}) that the components of the required displacement vector must satisfy the following conditions:
	\begin{equation}\label{11}
	\frac{\partial u_1}{\partial x_2}=\frac{\partial u_2}{\partial x_1};\quad\frac{\partial u_1}{\partial x_3}=\frac{\partial u_3}{\partial x_1};\quad\frac{\partial u_2}{\partial x_3}=\frac{\partial u_3}{\partial x_2}
	\end{equation}
	\begin{equation}\label{12}
	\frac{\partial u_1}{\partial x_1}=\frac{\partial u_2}{\partial x_2}=\frac{\partial u_3}{\partial x_3}
	\end{equation}
	Then from (\ref{12}) one can obtain the following relation:
	\begin{equation}\label{13}
	div \vec{U}=\frac{\partial u_1}{\partial x_1}+\frac{\partial u_2}{\partial x_2}+\frac{\partial u_3}{\partial x_3}=3\frac{\partial u_1}{\partial x_1}=3\frac{\partial u_2}{\partial x_2}=3\frac{\partial u_3}{\partial x_3}
	\end{equation}
	Further, considering (\ref{11}) and (\ref{12}) together, one can obtain:
	\begin{equation}\label{14}
	\frac{\partial^2 u_k}{\partial x_1^2}=\frac{\partial^2 u_k}{\partial x_2^2}=\frac{\partial^2 u_k}{\partial x_3^2},
	\end{equation}
	This implies:
	\begin{equation}\label{15}
	\Delta u_k=3\frac{\partial^2 u_k}{\partial x_1^2}=3\frac{\partial^2 u_k}{\partial x_2^2}=3\frac{\partial^2 u_k}{\partial x_3^2},\quad k=\overline{1,3}
	\end{equation}
	The obtained relations (\ref{13}) and (\ref{15}) allow us to rewrite equation (\ref{NS}) for the unknown component $ u_1 $ in the following form:
	$$3\mu\frac{\partial^2 u_1}{\partial x_1^2}+ (\lambda+\mu) \frac{\partial}{\partial x_1} (3\frac{\partial u_1}{\partial x_1}) + \rho F_1 =\rho \frac{\partial^2 u_1}{\partial t^2}$$
	Or, after conversion:
	\begin{equation}\label{w1}
	\frac{\partial^2 u_1}{\partial t^2}=\frac{3(\lambda+2\mu)}{\rho}\frac{\partial^2 u_1}{\partial x_1^2}+  F_1  
	\end{equation}
	\begin{equation}\label{w2}
	\frac{\partial^2 u_2}{\partial t^2}=\frac{3(\lambda+2\mu)}{\rho}\frac{\partial^2 u_2}{\partial x_2^2}+  F_2  
	\end{equation}
	\begin{equation}\label{w3}
	\frac{\partial^2 u_3}{\partial t^2}=\frac{3(\lambda+2\mu)}{\rho}\frac{\partial^2 u_3}{\partial x_3^2}+  F_3  
	\end{equation}
	Each of equations (\ref{w1}) - (\ref{w3}) is considered as a one-dimensional wave equation, with two of the three spatial variables being considered as constant parameters. Then, the (\ref{init1})-(\ref{init2}) conditions are initial data for them. As is known, the solution of these one-dimensional Cauchy problem is given by the well-known d'Alembert formulas (\ref{da1})-(\ref{da3}). From these formulas and the properties of the initial data, it is easy to see that $ u_k(t, x_1, x_2, x_3) \in C^\infty (G),\quad k = \overline {1,3} $ are bounded functions that vanish at infinity, together with its all derivatives. The boundedness of the components of the stress tensor defined by (\ref{tau}) is trivial.
\end{proof}
\section{Conclusion}
It can be proven that the class of vector-valued functions satisfying conditions (\ref{b1}) - (\ref{b3}) is an infinite-dimensional linear space. For initial data from this class, the mixed initial-boundary problems in bounded domains is correct. Exact solutions of such problems can also be obtained by modifying the above described method.

\end{document}